\newtheorem{theorem}{Theorem}
\newtheorem{lemma}[theorem]{Lemma}
\newtheorem{prop}[theorem]{Proposition}
\theoremstyle{definition}
\theoremstyle{plain}
\def\xideg#1{\partial_{#1}}
\def\degree{\partial}
\title{Euclid meets B\'ezout: Intersecting  algebraic plane curves with the Euclidean
algorithm}
\author{Jan Hilmar and Chris Smyth}
\date{}
\begin{document}
\maketitle

\begin{section}{Introduction}

We can be quite sure that Euclid ($\sim\!\! 325$  to $\sim\!\! 265$ BC) and \'Etienne B\'ezout (1730--83) never met. But we show here how Euclid's algorithm for polynomials can be used to find, with their  multiplicities, the points of intersection of two algebraic plane curves.  As a consequence, we obtain a simple proof of B\'ezout's Theorem, giving the total number of such intersections. 

We'd perhaps expect two such plane curves to be given by equations like $\sum_{i,j}a_{ij}x^iy^j=0$ and $\sum_{i,j}b_{ij}x^iy^j=0$, with coefficients in some field $K$, and ask for the points $(x,y)$ in $K^2$ lying on both curves.
However, this question has a nicer answer if it is tweaked a bit, so
 we modify the question in several ways. First of all, we seek points with coordinates in $\overline K$, the algebraic closure of $K$, instead of just in $K$. Secondly, we work with homogeneous polynomials $A(x,y,z)=\sum_{i,j}a_{ij}x^iy^jz^{m-i-j}$, where every term $a_{ij}x^iy^jz^{m-i-j}$ has the same  degree $i+j+(m-i-j)=m$, the  degree of $A$. Note that the point $(0,0,0)$ always lies on $A=0$, and that for every point $(x,y,z)$ on $A=0$ and every $\lambda$ the point $(\lambda x,\lambda y,\lambda z)$ also lies on $A=0$. Thus we would like to ignore $(0,0,0)$ and also regard 
$(x,y,z)$ and $(\lambda x,\lambda y,\lambda z)$ when $\lambda\ne 0$ as essentially the same point.
This brings us to our third tweak: we say that two nonzero points in $\overline K^3$ are {\it equivalent} if each is a  scalar multiple of the other. The equivalence classes of the resulting equivalence relation give us the projective plane $\overline K\mathbb P^2$. Then, choosing equivalence class representatives, we can for our purposes regard $\overline K\mathbb P^2$ as consisting of the points in $\overline K^3$ of the form $(x,y,1)$, $(x,1,0)$ and $(1,0,0)$.

Finally, we count our intersection points with {\it multiplicity}: just as the parabola $y=x^2$ intersects the $y$-axis with multiplicity $1$ but the (tangential) $x$-axis with multiplicity $2$, we attach a suitable positive integer as the multiplicity of every intersection point. Then take $A(x,y,z)$ and another homogeneous polynomial $B(x,y,z)=\sum_{i,j}b_{ij}x^iy^jz^{n-i-j}$, and ask our modified question:

How many intersection points are there of $A(x,y,z)=0$ and $B(x,y,z)=0$ in $\overline K\mathbb P^2$, counted with multiplicity, and how do we find them?

 The number of points is given by B\'ezout's Theorem:
 
 \begin{theorem}[B{\'e}zout's Theorem]\label{Th-B}
Let $A,B\in K[x,y,z]$  be homogeneous of degrees $m,n$ respectively, with
no nonconstant common factor.  Then  in $\overline K\mathbb P^2$ the curves $A=0$ and $B=0$
intersect in exactly $mn$ points, counting multiplicities.
\end{theorem}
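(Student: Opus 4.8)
The plan is to eliminate one of the three variables with Euclid's algorithm, so that finding the common zeros of $A$ and $B$ becomes a problem about a single polynomial in one variable, whose degree will turn out to be $mn$. First I would use a linear change of coordinates on $\overline K\mathbb P^2$ to arrange that the point $[0:0:1]$ lies on neither curve; since $\overline K$ is infinite this costs nothing, and it forces the coefficient of $z^m$ in $A$ and of $z^n$ in $B$ to be a nonzero constant. Writing $A=\sum_{i=0}^{m}a_i(x,y)\,z^{m-i}$ and $B=\sum_{j=0}^{n}b_j(x,y)\,z^{n-j}$, with $a_i,b_j$ homogeneous of degrees $i,j$ and $a_0,b_0\in\overline K^{\times}$, I would then run Euclid's algorithm on $A$ and $B$ in the variable $z$, working over the field $\overline K(x,y)$ (or, to avoid denominators, by pseudo-division over $\overline K[x,y]$).

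The quantity that records the output is the resultant $R(x,y)=\Res{z}(A,B)$, which the algorithm delivers, up to a nonzero factor, from the leading $z$-coefficients encountered in the remainder sequence. Two properties make everything work. First, $R$ is homogeneous of degree $mn$ in $x,y$; I would verify this by tracking degrees through the successive pseudo-divisions, the normalization $a_0,b_0\in\overline K^{\times}$ ensuring that no $z$-degree is lost along the way. Second, because $a_0,b_0$ are nonzero constants, specialization commutes with the resultant, so for each $[\alpha:\beta]$ one has $R(\alpha,\beta)=0$ exactly when $A(\alpha,\beta,z)$ and $B(\alpha,\beta,z)$ share a common root $z=\gamma$, i.e.\ exactly when $[\alpha:\beta:\gamma]$ lies on both curves.

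Since $A$ and $B$ have no nonconstant common factor, $R$ is not the zero polynomial, so over $\overline K$ it factors as $R(x,y)=c\prod_{k}(\beta_k x-\alpha_k y)^{e_k}$ with $\sum_k e_k=mn$; in particular the intersection is finite. After one further generic change of coordinates I may assume no two intersection points are collinear with $[0:0:1]$, so that each direction $[\alpha_k:\beta_k]$ lies under a single intersection point $P_k$, whose remaining coordinate $\gamma_k$ Euclid locates as the root of $\gcd\bigl(A(\alpha_k,\beta_k,z),B(\alpha_k,\beta_k,z)\bigr)$. Defining the multiplicity of the intersection at $P_k$ to be the exponent $e_k$ — the order to which $R$ vanishes in the direction $[\alpha_k:\beta_k]$ — the total, counted with multiplicity, is $\sum_k e_k=mn$, which is the theorem.

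The main obstacle is to show that this $e_k$ is the \emph{right} multiplicity: that it is independent of the admissible coordinate choices and that it reproduces the geometric count illustrated by the tangent line to $y=x^2$. Note that $e_k$ need not equal the multiplicity of $\gamma_k$ in the fibre $\gcd$ — Euclid uses the $\gcd$ only to \emph{locate} $P_k$, while the resultant's order of vanishing \emph{counts} it — so the two must be reconciled. I would establish well-definedness and agreement with the usual intersection multiplicity by relating the order of vanishing of $R$ along the line $\beta_k x=\alpha_k y$ to the local behaviour of $A$ and $B$ at $P_k$, using that the remainder sequence specializes compatibly everywhere except at the finitely many directions where a leading coefficient vanishes. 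Carrying this specialization argument through cleanly, rather than the degree bookkeeping, is where the work lies.
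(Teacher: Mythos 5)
Your route is the classical elimination-theoretic proof of B\'ezout's theorem, and it is genuinely different from the one in this paper: here no resultant is ever formed; instead the Euclidean division step is applied directly to the intersection cycle $A\cdot B$, the identities of Proposition \ref{a.bprops} reduce everything to intersections of lines, and the count follows by induction on $\xideg{x}B$ with degree bookkeeping in (\ref{Euclid}) and (\ref{E-3}). The price the paper pays is taking the local-ring definition of $i_\mathbf{P}(A,B)$ and its properties (Lemma \ref{L-i}) as input; the reward is that the total $mn$ comes out with the \emph{correct} multiplicities automatically, with no coordinate changes and no genericity assumptions.

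The genuine gap in your argument is exactly the one you flag in your final paragraph, and it cannot be deferred: you prove $\sum_k e_k=mn$, where $e_k$ is the order of vanishing of $R=\Res{z}(A,B)$ at the direction $(\alpha_k,\beta_k)$, but the theorem asserts $\sum_\mathbf{P} i_\mathbf{P}(A,B)=mn$ for the intersection multiplicity, and the identity $e_k=i_{\mathbf{P}_k}(A,B)$ is the hard technical core of the resultant proof --- comparable in difficulty to everything you have written. As it stands you have proved B\'ezout's theorem for an a priori different, coordinate-dependent count; you have shown neither that $e_k$ is unchanged by the two generic coordinate changes you perform, nor that it reproduces, say, multiplicity $2$ for $y=x^2$ against its tangent line. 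Your proposed repair (``relating the order of vanishing of $R$ along the line to the local behaviour of $A$ and $B$ at $P_k$'') names the statement to be proved rather than proving it. The standard ways to close the gap are either to show directly that $e_k=\dim_{\overline K}\bigl(R_{\mathbf{P}_k}/(A,B)_{\mathbf{P}_k}\bigr)$, as in Fulton, or to verify that your $e_k$ satisfies the properties of Lemma \ref{L-i}, which characterize intersection multiplicity --- but the latter essentially rebuilds the machinery this paper uses. The remainder of your outline (the normalization $a_0,b_0\in\overline K^{\times}$, homogeneity of $R$ of degree $mn$, compatibility of specialization with the resultant, finiteness of the intersection) is sound.
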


We give a simple proof of this result in Section \ref{S-B}. The algorithm given in Section \ref{algorithm} calculates these points, and their multiplicities. 

B{\'e}zout's Theorem also gives us an answer to our original (untweaked) question: we get rid of $z$ by setting it to $1$, and then the number of intersection points of  $\sum_{i,j}a_{ij}x^iy^j=0$ and $\sum_{i,j}b_{ij}x^iy^j=0$ is the number of points of the form $(x,y,1)$ with $x,y$ in $K$ lying on both homogeneous curves. Thus there are at most $mn$ of them.

 B{\'e}zout's Theorem  is a generalization of the Fundamental Theorem of Algebra, telling us
 that a polynomial $f(x)$ of degree $n$ with complex coefficients has $n$
 complex roots.    (The curves $y=f(x)$ and $y=0$ are replaced by arbitrary ones, and in projective space.)

The special case $m=n=1$ of B{\'e}zout's Theorem  tells us that two (distinct) lines in the projective plane always intersect at a point (no parallel lines in $\overline K\mathbb P^2$!). But in general finding the intersection points, and especially their multiplicities, is a nontrivial business. It is this process which we aim to demystify here, by reducing the general case to the case $m=n=1$.

The intersection of two curves $A=0$ and $B=0$ can be expressed as a formal sum $A\cdot B$ of their intersection points, called the {\it intersection cycle}, defined below. The idea of the algorithm is to use  the steps of the Euclidean algorithm
 to express  $A\cdot B$ in terms of intersection
 cycles of curves defined by polynomials of lower and lower $x$-degree. 
 In the end, we can write $A\cdot B$
  in terms of intersection cycles of $2$-variable homogeneous polynomials. But these are simply products of lines, whose intersection
  points can be written down immediately (see Proposition \ref{a.bprops}(d) below).

\end{section}

\begin{section}{Intersection Cycles of Algebraic Curves}\label{S-three}

Let $K$ be a field and denote by $\overline K{\mathbb P}^2$  the projective plane
over $\overline K$. For a homogeneous
polynomial $A(x,y,z)\in K[x,y,z]$, we will abuse notation slightly
 by identifying it with the curve $A=0$ in $\overline K\mathbb P^2$.
Further, let
$\xideg{x}A$ denote the $x$-degree of
the polynomial $A(x,y,z)$ and  $\degree A$ its (total) degree. While the $\gcd$ of $A$ and $B$ is defined only up to multiplication by a scalar, we write $\gcd(A,B)=1$ for two such
curves $A$ and $B$ if they have no nonconstant  common
factor. Clearly  $A$ and any nonzero scalar multiple $\lambda A$ of $A$ define the same curve.
From now on, all polynomials in upper case ($A$, $B$, $C$, \dots)  will be assumed to be homogeneous.

 For any point $\mathbf{P}$ in
 $\overline K\mathbb P^2$, and curves $A$ and $B$, we denote by $i_\mathbf{P}(A,B)$ the intersection
multiplicity of the curves $A$ and $B$ at $\mathbf{P}$. This is a nonnegative integer, positive if $\mathbf{P}$ lies on both $A$ and $B$, and otherwise zero. We seek the formal sum $A\cdot
B=\sum_\mathbf{P} i_\mathbf{P}(A,B)\mathbf{P}$,  the  {\it intersection cycle} of $A$
and $B$, which is simply an object for recording the intersection of
these curves. Our algorithm does not need to use the definition of $i_\mathbf{P}(A,B)$ (for this, see the appendix), only the standard properties of intersection cycles in the following proposition.
\begin{prop}\label{a.bprops}
Let $A,B$ and $C$ be algebraic curves with $\gcd(A,B)=\gcd(A,C)=1$. Then
\begin{itemize}

\item[$(a)$] $A\cdot B=B\cdot A$;
\item[$(b)$] $A\cdot(BC)=A\cdot B+A\cdot C$;
\item[$(c)$] $A\cdot(B+AC)=A\cdot B$ if $\degree B=\degree (AC)$;
\item[(d)] If $A$ and $B$ are distinct lines, say  $A(x,y,z)=a_1x+a_2y+a_3z$ and  $B(x,y,z)=b_1x+b_2y+b_3z$, then their intersection cycle $A\cdot B$ is the single point $\mathbf{P}_\times$ given by
\begin{eqnarray}
\mathbf{P}_\times&=&\left(
\left|\begin{array}{cc}
a_2&a_3\\
b_2&b_3
\end{array}\right|,
\left|\begin{array}{cc}
a_3&a_1\\
b_3&b_1
\end{array}\right|,
\left|\begin{array}{cc}
a_1&a_2\\
b_1&b_2
\end{array}\right|
\right).\label{l1.l2}
\end{eqnarray}
\end{itemize}
\end{prop}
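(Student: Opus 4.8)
The plan is to obtain every assertion from the standard local description of intersection multiplicity (the one I expect the appendix to supply), namely $i_\mathbf{P}(A,B)=\dim_{\overline K}\mathcal O_\mathbf{P}/(A,B)$, where $\mathcal O_\mathbf{P}$ is the local ring of $\overline K\mathbb P^2$ at $\mathbf{P}$ and $(A,B)$ is the ideal generated there by $A,B$ dehomogenised in an affine chart containing $\mathbf{P}$. Since $\gcd(A,B)=1$ the curves share no component, so each such quotient is finite-dimensional and $\mathbf{P}$ is an isolated point of $A\cap B$. Because the cycle $A\cdot B=\sum_\mathbf{P} i_\mathbf{P}(A,B)\,\mathbf{P}$ is just the formal record of these multiplicities, each of (a)--(c) reduces to a pointwise identity of dimensions, which I would then sum over $\mathbf{P}$.

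Parts (a) and (c) I would dispatch first, as each rests on an equality of ideals. For (a), $(A,B)=(B,A)$ gives $i_\mathbf{P}(A,B)=i_\mathbf{P}(B,A)$ at every point. For (c), the hypothesis $\partial B=\partial(AC)$ ensures that $B+AC$ is a form of the correct degree and that $\gcd(A,B+AC)=\gcd(A,B)=1$ (a common factor of $A$ and $B+AC$ would divide $B$), so the intersection cycle is defined; then $(A,B+AC)=(A,B)$ as ideals of $\mathcal O_\mathbf{P}$, whence the local rings, and hence all the multiplicities, agree.

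The real content is (b), which I would prove pointwise from the short exact sequence of $\mathcal O_\mathbf{P}$-modules
\[
0 \longrightarrow \mathcal O_\mathbf{P}/(A,C) \xrightarrow{\,\cdot B\,} \mathcal O_\mathbf{P}/(A,BC) \longrightarrow \mathcal O_\mathbf{P}/(A,B) \longrightarrow 0 ,
\]
where the right-hand arrow is the natural surjection (valid since $BC\in(A,B)$) and the left-hand arrow is multiplication by $B$. Taking $\overline K$-dimensions converts exactness into $i_\mathbf{P}(A,BC)=i_\mathbf{P}(A,B)+i_\mathbf{P}(A,C)$, and summing gives (b). The step I expect to be the main obstacle is the injectivity of multiplication by $B$: one must show that $Bz\in(A,BC)$ forces $z\in(A,C)$. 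Writing $Bz=uA+vBC$ yields $B(z-vC)=uA$, so the claim amounts to $B$ being a non-zero-divisor modulo $(A)$ in $\mathcal O_\mathbf{P}$. This is exactly where the hypotheses $\gcd(A,B)=\gcd(A,C)=1$ enter, since they guarantee that $B$ vanishes on no component of the curve $A$ through $\mathbf{P}$; making this rigorous (rather than merely plausible) is the delicate part.

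Finally, (d) I would verify by hand. With $\mathbf a=(a_1,a_2,a_3)$ and $\mathbf b=(b_1,b_2,b_3)$, the point $\mathbf{P}_\times$ of \eqref{l1.l2} is the cross product $\mathbf a\times\mathbf b$; substituting it into $A$ and into $B$ produces $3\times3$ determinants with a repeated row, so $A(\mathbf{P}_\times)=B(\mathbf{P}_\times)=0$ and $\mathbf{P}_\times$ lies on both lines. As the lines are distinct, $\mathbf a$ and $\mathbf b$ are linearly independent, so the coefficient matrix has rank $2$; this forces $\mathbf{P}_\times\neq(0,0,0)$, so it is a genuine point of $\overline K\mathbb P^2$, and makes the common solution space one-dimensional, so $\mathbf{P}_\times$ is the \emph{only} intersection point. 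That its multiplicity is $1$ follows from the local definition, since the two distinct lines cross transversally and $\mathcal O_{\mathbf{P}_\times}/(A,B)\cong\overline K$. Hence $A\cdot B=\mathbf{P}_\times$.
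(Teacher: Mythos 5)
Your proposal is correct and takes essentially the same route as the paper: reduce everything to the local definition $i_\mathbf{P}(A,B)=\dim R_\mathbf{P}/(A,B)_\mathbf{P}$, get (a) and (c) from equality of ideals, get (b) from the short exact sequence with multiplication by $B$ on the left, and verify (d) by the cross-product/Cramer computation. The two steps you flag as delicate are exactly where the paper does its work, and both close easily: for the injectivity in (b), clearing denominators turns $B(z-vC)=uA$ into a polynomial identity, and unique factorization in $\overline K[x,y,z]$ together with $\gcd(A,B)=1$ forces $A$ to divide $z-vC$; for the multiplicity $1$ in (d), the paper adjoins a third line $C$ making $(A,B,C)$ a linear coordinate system, so every element of $R_{\mathbf{P}_\times}$ is congruent to a constant modulo $(A,B)_{\mathbf{P}_\times}$ and the quotient is one-dimensional.
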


These properties are quite natural: part (a) just says that the intersection points don't depend on the order of the curves, while part (b) tells us that the points on $A$ and $BC$ are the points on $A$ and $B$ plus the points on $A$ and $C$, and that the multiplicities add. For part (c), we clearly need the condition $\degree B=\degree (AC)$ to make $B+AC$ homogeneous. Then any point on $A$ and $B$ will also lie on $B+AC$. The fact that the multiplicity at each intersection point is the same comes from the fact (see appendix) that the multiplicity is defined in terms of an ideal generated by the two curves, and $A$ and $B$ generate the same ideal as $A$ and $B+AC$.

The proof of this Proposition follows straight from Lemma \ref{L-i} in the appendix, where we state and prove corresponding properties of the intersection
multiplicity $i_\mathbf{P}(A,B)$.

\end{section}

\begin{section}{The Algorithm}\label{algorithm}

\begin{subsection}{The Euclidean part}\label{S-3.1}

Let $A,B\in K[x,y,z]$ be algebraic curves with $\gcd(A,B)=1$ and, say,
$\xideg{x}A\geq\xideg{x}B\ge 1$. By polynomial division we can find $q,
r\in K(y,z)[x]$ with
\begin{eqnarray*}
A&=&qB+r
\end{eqnarray*}
and $0\leq\xideg{x}r<\xideg{x}B$ and $q, r\ne 0$. Since the coefficients of $q$ and $r$ are rational functions of $y$ and $z$, we can multiply through by the
least common multiple $H\in K[y,z]$ of their denominators to get
\begin{eqnarray*}
HA&=&QB+R,
\end{eqnarray*}
where $Q=qH,R=rH\in K[x,y,z]$ are both homogeneous. Since $HA$ is homogeneous, too, $\degree(QB)=\degree R$. Suppose now that $G=\gcd(B,R)$. As $\gcd(A,B)=1$, it is clear that also $\gcd(B,H)=G$, so we can divide
through by $G$ to get
\begin{eqnarray}
H'A&=&QB'+R',\label{Euclid}
\end{eqnarray}
where $B=B'G$, $H=H'G$, $R=R'G$, and  $\gcd(B',R')=\gcd(B',H')=1$. 
Now
\begin{align}
A\cdot B &= A\cdot (B'G)&\notag  \\
&=A\cdot B' + A\cdot G \qquad &\text{(by Proposition \ref{a.bprops}(b))}\notag \\
&=(H'A)\cdot B' -H'\cdot B'+A\cdot G &\text{(by Proposition \ref{a.bprops}(b) again)}\notag  \\
&=(QB'+R')\cdot B' -H'\cdot B'+A\cdot G\qquad&\text{(using (\ref{Euclid}))}\notag  \\
 &=R'\cdot B'-H'\cdot B'+A\cdot G \qquad&\text{(by Proposition \ref{a.bprops}(c)).}\label{E-3}
\end{align}

Note that as $G$ and $H'$ are both  factors of $H\in K[y,z]$, we have $\xideg{x}G=\xideg{x}H'=0$ and $\xideg{x}B'= \xideg{x}B$. Also, because 
$$
\xideg{x}R'\le \xideg{x}r<\xideg{x}B\le \xideg{x}A,
$$
we see that the first intersection cycle $R'\cdot B'$ on the right-hand side of
(\ref{E-3}) has the property that the minimum of the $x$-degrees of its curves is less than the minimum of the $x$-degrees of the curves of $A\cdot B$, while the second and third intersection cycles both have one curve with $x$-degree $0$. Thus by next applying (\ref{E-3})  to $R'\cdot B'$, and proceeding recursively, we can express $A\cdot B$ as a sum of terms $\pm C\cdot D$, where $C\in K[x,y,z]$ and $D\in K[y,z]$. We have thus reduced the problem of computing $A\cdot B$ to computing such simpler intersection cycles.

\end{subsection}

\begin{subsection}{Intersecting a curve with a product of
lines}\label{S-lines}
 Given $C\in K[x,y,z]$ and $D\in K[y,z]$, we
first note that, because of Proposition \ref{a.bprops}(b), we can
assume that $D$ is irreducible over $K$. If $D$ doesn't contain the variable $y$, then, being irreducible, it must be $z$. Otherwise, over $\overline K$ it will factor as, say, 
\begin{equation}\label{E-d}
D(y,z)=\prod_\beta(y-\beta z),
\end{equation}
 where the $\beta$ are the roots in $\overline K$ of $D(y,1)$. Thus $D$ is a product of lines.
Then since
\begin{align*}
C(x,y,z) &= C(x,y,0)+zC'(x,y,z)\\
\intertext{and also}
 C(x,y,z) &=C(x,\beta z,z)+(y-\beta z)C''(x,y,z)
\end{align*}
for some $C',C''$ in $K[x,y,z]$ we have by Proposition \ref{a.bprops}(c) that
\begin{align}
C\cdot z &=C(x,y,0)\cdot z\notag\\
\intertext{and}
 C\cdot (y-\beta z) &= C(x,\beta z,z)\cdot (y-\beta z).\label{E-6}
\end{align}
 Thus, either $D=z$ and $C\cdot
D=C(x,y,0)\cdot z$, or, using (\ref{E-d}), we have
\begin{eqnarray*}
C\cdot D&=&C(x,y,z)\cdot \left(\prod_{\beta}(y-\beta z)\right)\\
&=&\sum_\beta C(x,\beta z,z)\cdot (y-\beta z)\qquad\text{(by (\ref{E-6})).} 
\end{eqnarray*}

Next, in the case $D=z$, by factorizing $C(x,y,0)$ first into
irreducible factors over $K$, and then over its algebraic closure $\overline K$ (as either $y$
or a product $\prod_\alpha (x-\alpha y)$  of lines), we can reduce
the problem of finding $C\cdot D$ to  one of intersecting lines.
Specifically, for an irreducible factor $C_1(x,y)$ of $C(x,y,0)$ we get $C_1\cdot z=
(1,0,0)$ if $C_1=y$, and 
\begin{equation}\label{E-sum}
C_1\cdot z=\sum_\alpha (x-\alpha y)\cdot z=\sum_\alpha(\alpha,1,0)\qquad\text{(using (\ref{l1.l2}))}
\end{equation} 
 otherwise, where the $\alpha$ are the roots of
$C_1(x,1)$.

In the case $D(y,z)=\prod_\beta(y-\beta z)$, we first factorize
$C(x,\beta z,z)$  over $K(\beta)$. Taking $C_2(x,z)$ as a typical
factor, we have that either $C_2=z$ and 
$$
C_2\cdot D=\sum_\beta z\cdot (y-\beta z)=(\degree D)(1,0,0);
$$
or that, over $\overline K$, we have $C_2(x,z)=\prod_\gamma(x-\gamma z)$, where the $\gamma$ are the roots in $\overline K$ of $C_2(x,1)$, and 
$$
C_2\cdot D=\sum_\beta\sum_\gamma (x-\gamma z)\cdot (y-\beta z)=\sum_\beta\sum_\gamma(\gamma,\beta,1).
$$

\end{subsection}

\begin{subsection}{The result}

From our algorithm we see that the intersection cycle $A\cdot B$ is a sum or difference of
simpler sums of the following types:
\begin{itemize}
\item[(1)] The point $(1,0,0)$;
\item[(2)]  A sum $\sum_\alpha (\alpha,1,0)$, the sum being taken over roots $\alpha$ of
a monic polynomial $f\in K[x]$ irreducible over $K$; let us denote
this sum by $\mathcal C_0(f(x))$;
\item[(3)]  A double sum  $\sum_\beta\sum_\gamma(\gamma,\beta,1)$, where $\sum_\beta$ is taken over the roots
$\beta$ of some monic polynomial $g\in K[y]$ irreducible over $K$,
and where $\sum_\gamma$ is taken over the roots $\gamma$ of some
monic polynomial $h_\beta\in \ K(\beta)[x]$ irreducible over  $K(\beta)$. Then we can write $h_\beta$ as a $2$-variable polynomial $h(x,\beta)$ with coefficients in $K$, where the $\beta$-degree of $h$ is less than the degree of $g$;
denote our double sum by $\mathcal C_1(h(x,y),g(y))$. Thus  $h$ and $g$ will
 specify this intersection cycle canonically.
\end{itemize}

We note that $(1,0,0)$ and the sums in (2) and (3) are {\it Galois-invariant}: they are
 unchanged by the action of any automorphism of $\overline K$ that fixes $K$. Thus we call them {\it Galois cycles}.
 Any point $\mathbf{P}\in\overline K\mathbb P^2$ can appear in only one such cycle: the cycles do not overlap. Further,
 since $A\cdot B$ is  a formal sum of {\it positive} integer multiples of the intersection points of $A$ and $B$,
 any negative multiple of Galois cycles in the sum of sums the algorithm gives for $A\cdot B$ must be
 cancelled by positive multiples of the same cycles. Writing Galois
 cycles in a canonical way as in (1), (2), and (3) above enables us
 to actually carry out such cancellation by computer.
Thus, in the end, the algorithm will give $A\cdot B$ as a sum (no differences!) of Galois cycles.

{\bf Remarks.} 1. If $f$ is linear, then $\mathcal C_0(f(x))$ is a single point. Similarly, if $g$ and $h$ are linear, then $\mathcal C_1(h(x,y),g(y))$ is a single point.
 For example, $\mathcal C_0(x-2)=(2,1,0)$, while $\mathcal C_1(x-3,y-4)=(3,4,1)$.
More generally, $\mathcal C_0(f(x))$ is a formal sum of $\degree f$ points, while 
$\mathcal C_1(h(x,y),g(y))$ is a sum of $\xideg{x}h\,\degree g$ points.

 2. In the above analysis, we have in several places, in equation (\ref{E-sum}) for instance, summed over the roots of a polynomial irreducible over $K$. If the polynomial  has multiple roots (i.e., is inseparable), then of course for each factor $(x-\alpha y)^\ell$ we take $\ell$ copies of whatever is being summed. (This can in fact happen only over certain fields of finite characteristic $p$, in which case $\ell$ is a power of $p$. See \cite[Prop. 3.8, p. 530]{DF}.) 
 
 3. To obtain our expression for $A\cdot B$ as a sum of Galois cycles we needed to factorize some polynomials over $K$, and some over certain fields $K(\beta)$. For many fields there are algorithms for doing this, depending on the particular field; for instance, factorization over the field $K=\mathbb Q$ of rationals, and over finite extensions $\mathbb Q(\beta)$, is implemented in Maple. And only at the end, when we want to write the Galois cycles in the answer as sums of points, do we need to actually find the roots in $\overline K$ of these polynomials.
 
 4. In Sections \ref{S-3.1} and \ref{S-lines} we have brazenly taken for granted that certain polynomials ($Q$, $R$,\dots) are homogenous; so as not to interrupt the flow of the paper, we have left verification of these facts to the careful reader.

\end{subsection}
\begin{subsection}{Examples}
As an illustration of the method, we now look at two examples of using the Euclidean algorithm to compute the intersection cycle of two curves $A$ and $B$ defined over the rationals:

\begin{figure}[h]
\begin{center}
\leavevmode
\hbox{
\epsfxsize=3.0in
\epsffile{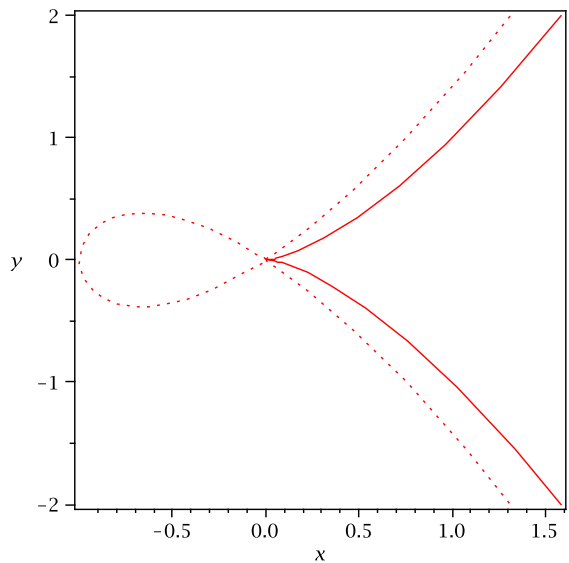}
}
\end{center}
\caption{The `slice' $z=1$ of the cubic curves $y^2z-x^3$ (solid line) and $y^2z-x^2(x+z)$ (dotted line) near $(0,0,1)$, an intersection point of multiplicity $4$. (These are the curves $y^2=x^3$ and $y^2=x^2(x+1)$.)} \label{F-cubics}
\end{figure}

\begin{figure}[h]
\begin{center}
\leavevmode
\hbox{
\epsfxsize=3.0in
\epsffile{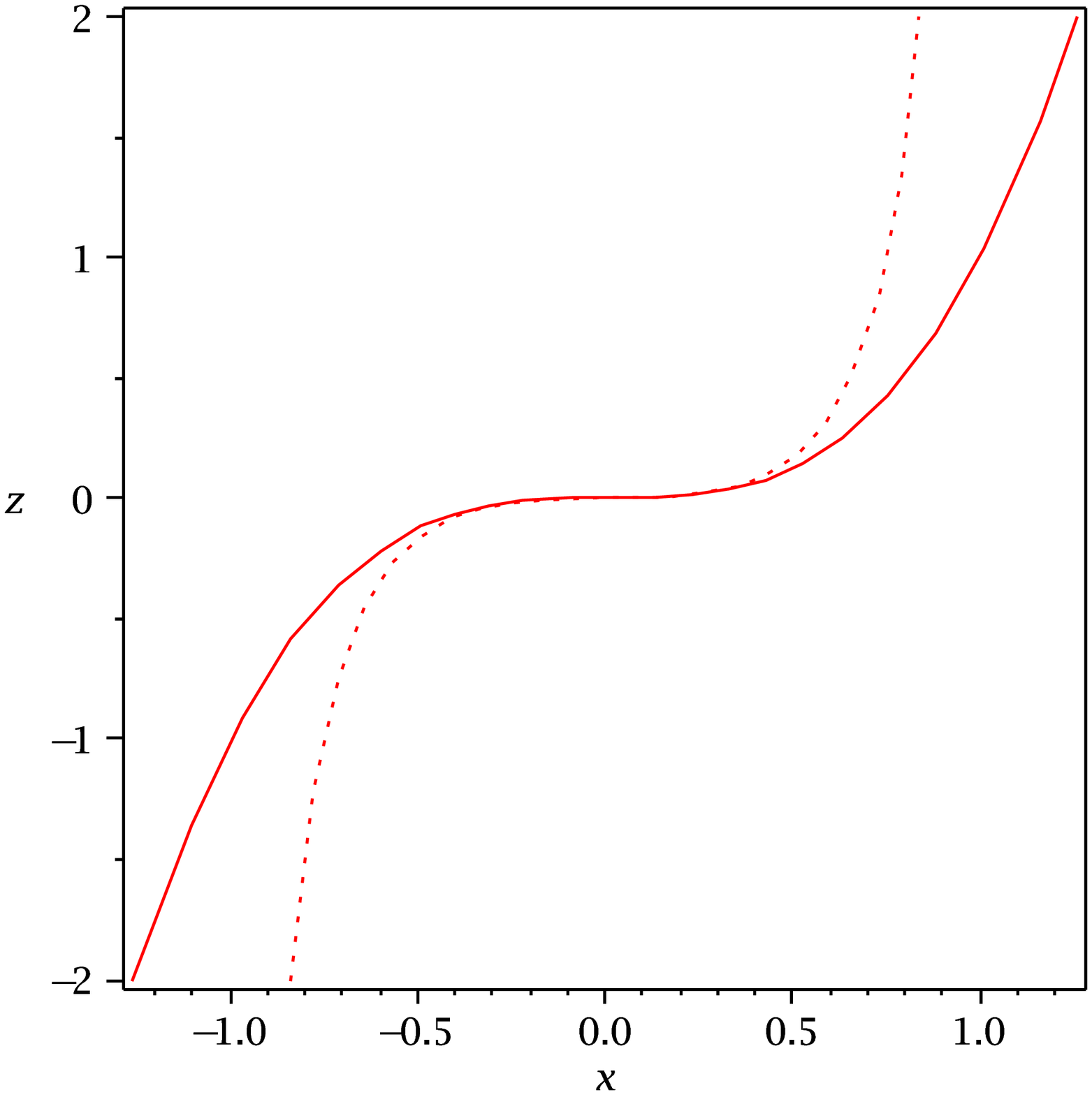}
}
\end{center}
\caption{The `slice' $y=1$ of the same curves $y^2z-x^3$ (solid line)  and $y^2z-x^2(x+z)$  (dotted line) near $(0,1,0)$, an intersection point of multiplicity $5$. (These are the curves $z=x^3$ and $z=x^3/(1-x^2)$.)} \label{F-cubic_xz}
\end{figure}

{\bf Example 1.} Take 
\begin{eqnarray*}
A(x,y,z)&=& y^2z-x^3\\
B(x,y,z)&=& y^2z-x^2(x+z).
\end{eqnarray*}

Thus the equations $A=0$ and $B=0$ are homogenized versions of the cubic curves $y^2=x^3$ and $y^2=x^2(x+1)$, plotted in Figure 1. We see that they intersect at the origin $(0,0,1)$, but it is not immediately clear what the multiplicity of intersection there is. And are there other intersection points?

Applying our (i.e., Euclid's!) algorithm to $A$ and $B$ as polynomials in $x$, we first have
\begin{eqnarray*}
A(x,y,z)&=& B(x,y,z) +x^2z,\\
\end{eqnarray*}
so that $A\cdot B=A\cdot(x^2z)=2(A\cdot x)+A\cdot z$, using Proposition \ref{a.bprops}(c) and then (b). Then $A\cdot x=(y^2z)\cdot x=2(y\cdot x)+z\cdot x=2(0,0,1)+(0,1,0)$, using \ref{a.bprops}(d), while $A\cdot z=(x^3)\cdot z=3(0,1,0)$. Collecting the results together, we have $A\cdot B= 4(0,0,1)+5(0,1,0)$. Thus $A$ and $B$ intersect at  $(0,0,1)$ with multiplicity $4$ (see Figure 1)  and at $(0,1,0)$ with multiplicity $5$ (Figure 2). Since both curves have degree $3$, and $4+5=3\times 3$, we have checked out B\'ezout's Theorem for this example. Note too that in our standard notation for Galois cycles we have $(0,0,1)=\mathcal C_1(x,y)$ and $(0,1,0)=\mathcal C_0(x)$.

{\bf Example 2.}
 Our second example has been cooked up to give an answer requiring  larger Galois cycles, as well as $(1,0,0)$:
take
\begin{eqnarray*}
A(x,y,z)&=& (y-z)x^5+(y^2-yz)x^4+(y^3-y^2z)x^3\\ & & +(-y^2z^2+yz^3)x^2+
(-y^3z^2+y^2z^3)x-y^4z^2+y^3z^3\\
B(x,y,z)&=&(y^2-2z^2)x^2+(y^3-2yz^2)x+y^4-y^2z^2-2z^4.
\end{eqnarray*}
Applying one step of  Euclid's algorithm to $A$ and $B$ as polynomials in $x$,  we get
$$
A=\frac{(y-z)x(x^2-z^2)}{y^2-2z^2}B+z^2(y-z)(z^2x-y^3);
$$
thus clearing the denominator $y^2-2z^2$ gives
$$
(y^2-2z^2)A=(y-z)x(x^2-z^2)B+(y^2-2z^2)z^2(y-z)(z^2x-y^3).
$$
Then application of (\ref{E-3}) gives 
\begin{equation}\label{E-ab}
A\cdot B=R'\cdot B'+A\cdot G,
\end{equation}
where
\begin{eqnarray*}
R'(x,y,z)&=&z^2(y-z)(z^2x-y^3)\\
B'(x,y,z)&=&x^2+xy+y^2+z^2\\
G(y,z)&=&y^2-2z^2,
\end{eqnarray*}
the $H'\cdot B'$ term not appearing as $H'=1$ here.

Repeating the process with $B'$ and $R'$,  applying (\ref{E-3}) again, and then using Proposition \ref{a.bprops}(b) and (c), we get
\begin{align*}
R'\cdot B'&= (x^2+xy+y^2+z^2)\cdot(z^2(y-z))+(-y^3+xz^2)\cdot ((y^2+z^2)(y^4+z^4))\\
&\qquad -z^4\cdot(-y^3+xz^2)\\
&=2((x^2+xy+y^2)\cdot z)+(x^2+xy+y^2+z^2)\cdot (y-z)\\
&+(-y^3+xz^2)\cdot (y^2+z^2)+(-y^3+xz^2)\cdot(y^4+z^4)
-12(z\cdot y)\\
&=2\sum_{\alpha:\alpha^2+\alpha+1=0}(x-\alpha y)\cdot z+\sum_{\gamma:\gamma^2+\gamma+2=0}(x-\gamma y)\cdot(y-z)\\
&\qquad +\sum_{\beta:\beta^2+1=0}(-y^3+xz^2)\cdot (y-\beta z)\\
&\qquad \qquad +\sum_{\beta:\beta^4+1=0}(-y^3+xz^2)\cdot (y-\beta z)-12(1,0,0)\\
&=2\sum_{\alpha:\alpha^2+\alpha+1=0}(\alpha,1,0)+\sum_{\gamma:\gamma^2+\gamma+2=0}(\gamma,1,1)\\ 
&\qquad +\sum_{\beta:\beta^2+1=0}(-(\beta z)^3+xz^2)\cdot (y-\beta z)\\
&\qquad \qquad +\sum_{\beta:\beta^4+1=0}(-(\beta z)^3+xz^2)\cdot (y-\beta z)-12(1,0,0).\\
\end{align*}
Now 
$$
2\sum_{\alpha:\alpha^2+\alpha+1=0}(\alpha,1,0)+\sum_{\gamma:\gamma^2+\gamma+2=0}(\gamma,1,1)=2\mathcal C_0(x^2+x+1)+\mathcal C_1(x^2+x+2,y-1),
$$
while we can readily show that
$$
\sum_{\beta:\beta^2+1=0}(-(\beta z)^3+xz^2)\cdot (y-\beta z)=4(1,0,0)+\mathcal C_1(x+y,y^2+1),
$$
and
$$
\sum_{\beta:\beta^4+1=0}(-(\beta z)^3+xz^2)\cdot (y-\beta z)=8(1,0,0)+\mathcal C_1(x-y^3,y^4+1).
$$
Thus
$$
R'\cdot B'=2\mathcal C_0(x^2+x+1)+\mathcal C_1(x^2+x+2,y-1)+\mathcal C_1(x+y,y^2+1)
+\mathcal C_1(x-y^3,y^4+1).
$$
So to compute $A\cdot B$ it remains only to evaluate $A\cdot G$. Now
\begin{align*}
A(x,y,z)\cdot G&=A(x,y,z)\cdot(y^2-2z^2)\\
&=\sum_{\beta:\beta^2-2=0}A(x,\beta z,z)\cdot (y-\beta z),
\end{align*}
which we can show equals
$$
\mathcal C_1(x^3-y,y^2-2)+\mathcal C_1(x^2+yx+2,y^2-2)+2(1,0,0).
$$
Hence we obtain  from (\ref{E-ab}) that $A\cdot B$ can be written as a sum of Galois cycles as
\begin{align*}
A\cdot B=&2(1,0,0)+2\mathcal C_0(x^2+x+1)+\mathcal C_1(x^2+x+2,y-1)+\mathcal C_1(x+y,y^2+1)\\
&+\mathcal C_1(x-y^3,y^4+1)+
\mathcal C_1(x^3-y,y^2-2)+\mathcal C_1(x^2+yx+2,y^2-2).
\end{align*}

Once this final form has been obtained, the Galois cycles can be unpacked to write them explicitly as  sums of  points. For instance, $\mathcal C_0(x^2+x+1)=(\omega,1,0)+(\omega^2,1,0)$ where $\omega=\frac{-1+\sqrt{-3}}{2}$, and $\mathcal C_1(x^3-y,y^2-2)=(\gamma,\gamma^3,1)+
(\omega\gamma,\gamma^3,1)+(\omega^2\gamma,\gamma^3,1)+(-\gamma,-\gamma^3,1)+
(-\omega\gamma,-\gamma^3,1)+(-\omega^2\gamma,-\gamma^3,1)$, where $\gamma=2^{1/6}$.

\

 The details of these examples have been given for illustrative purposes only. Of course the algorithm, being deterministic and recursive, is readily automated.

\end{subsection}

\end{section}

\begin{section}{Proof of B{\'e}zout's Theorem}\label{S-B}

We now show that the algorithm described in Section \ref{algorithm}
can be used to give a simple proof of B{\'e}zout's Theorem (Theorem \ref{Th-B}).

\begin{proof} We need to show that $\#(A\cdot B) = \sum_{\mathbf{P}}i_\mathbf{P}(A,B)=mn$.  We proceed by
induction on the $x$-degree of $B$. First suppose that $B$ has $x$-degree
$0$. Then $B$ factors over $\overline K$ into a product of $n$ lines $L$,
so that, by Proposition \ref{a.bprops}(b), $A\cdot B$ is a sum of $n$
intersection cycles $A\cdot L$. From Section \ref{S-lines}, each
$A\cdot L$ is equal to $A'\cdot L$, where $A'$ is a polynomial in two variables of degree $m$, and thus a product of $m$ lines. Hence $A\cdot L$ can be written as a sum of $m$ intersections $L'\cdot L$,
giving $mn$ such intersections in total. Since, by Proposition
\ref{a.bprops}(d), $L'\cdot L$ consists of a single point, we have
$\#(A\cdot B) = mn$ in this case.

Suppose now that $B$ has $x$-degree $k>0$ and that we know that  the
result holds  for all  $B$ with $\xideg{x}B<k$ and for all $A$.
 Then, in the notation of Section \ref{algorithm} we have, by (\ref{E-3}),
\begin{eqnarray*}
\#(A\cdot B)&=&\#(R'\cdot B')- \#(H'\cdot B')+\#(A\cdot G)\\
&=&(\degree R'-\degree H')\degree B'+\degree A\degree G,
\end{eqnarray*}
recalling that $\xideg{x}R'<\xideg{x}B=k$ and
$\xideg{x}H'=\xideg{x}G=0$.

Using the fact that all polynomials involved are homogeneous, we
have from (\ref{Euclid}) that $\degree R'-\degree H'=\degree A$.
Finally, since $\degree B'+\degree G=\degree B$ from $B=B'G$, the result
$\#(A\cdot B)=\degree A\,\degree B=mn$ follows for $\xideg{x}B=k$. This
proves the inductive step.

\end{proof}

\end{section}

\begin{section}{Appendix: Intersection multiplicity \\ of algebraic curves}\label{prelims}

In Section \ref{S-three}, we used the properties of intersection cycles $A\cdot B$ given in Proposition \ref{a.bprops} without actually defining  intersection multiplicity $i_\mathbf{P}(A,B)$. In order to make this article completely self-contained, we now give this definition, and derive the properties that we need to prove Proposition \ref{a.bprops}. This is standard material, which can be found, for instance, in \cite{F} or \cite{K}.

Let $A,B\in K[x,y,z]$ be algebraic curves with $\gcd(A,B)=1$.  Define the {\it local ring of rational functions of degree $0$} at $\mathbf{P}\in\overline  K{\mathbb P}^2$ to be
\begin{eqnarray*}
R_\mathbf{P}&=&\left\{\frac{S}{T}: S,T\in \overline K[x,y,z], \degree S=\degree T, T(\mathbf{P})\neq0\right\},
\end{eqnarray*}
where all polynomials are homogeneous. Further, define
\begin{eqnarray*}
(A,B)_\mathbf{P}&=&\left\{\frac{S}{T}\in R_\mathbf{P}: S=MA+NB, M,N,T\in \overline K[x,y,z], T(\mathbf{P})\neq 0\right\},
\end{eqnarray*}
 the ideal generated by $A$ and $B$ in $R_\mathbf{P}$.

Following \cite{F}, we can now define the intersection multiplicity
$i_\mathbf{P}(A,B)$ of $A$ and $B$ to be the dimension of the $\overline K$-vector space
 $R_\mathbf{P}/(A,B)_\mathbf{P}$ (and so equal to $0$ if $(A,B)_\mathbf{P}=R_\mathbf{P}$).

\begin{lemma}\label{L-i}
Let $\mathbf{P}\in\overline K{\mathbb P}^2$ and $A,B,C\in K[x,y,z]$ with $\gcd(A,B)=\gcd(A,C)=1$. Then
\begin{itemize}
\item[$(a)$] $i_\mathbf{P}(A,B)>0$ if and only if $\mathbf{P}$  lies on both $A$ and $B$;
\item[$(b)$] $i_\mathbf{P}(A,B)=i_\mathbf{P}(B,A)$;
\item[$(c)$] $i_\mathbf{P}(A,BC)=i_\mathbf{P}(A,B)+i_\mathbf{P}(A,C)$;
\item[$(d)$] $i_\mathbf{P}(A,B+AC)=i_\mathbf{P}(A,B)$ if $\degree (AC)=\degree  B$;
\item[$(e)$] For distinct lines $L,L'$, the only point on both lines is $\mathbf{P}_\times$ given by (\ref{l1.l2}), and $i_{\mathbf{P}_\times}(L,L')=1$.
\end{itemize}
\end{lemma}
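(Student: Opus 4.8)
The plan is to prove all five properties directly from the definition $i_\mathbf{P}(A,B)=\dim_{\overline K}R_\mathbf{P}/(A,B)_\mathbf{P}$, disposing of the purely formal statements first and saving the multiplicativity (c) for last, as it carries all the real content.

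Parts (b) and (d) I would handle in a line each, since in both cases the two ideals are \emph{literally equal}. For (b), the numerators $MA+NB$ and $NB+MA$ range over the same set, so $(A,B)_\mathbf{P}=(B,A)_\mathbf{P}$. For (d), the hypothesis $\degree(AC)=\degree B$ is exactly what makes $B+AC$ homogeneous of degree $\degree B$, and the identities $MA+N(B+AC)=(M+NC)A+NB$ and $MA+NB=(M-NC)A+N(B+AC)$ show $(A,B)_\mathbf{P}=(A,B+AC)_\mathbf{P}$; equal ideals give equal quotients, hence equal dimensions. For (a), I would use that $R_\mathbf{P}$ sits inside the field of degree-$0$ rational functions, so evaluation $S/T\mapsto S(\mathbf{P})/T(\mathbf{P})$ is a well-defined ring homomorphism $R_\mathbf{P}\to\overline K$. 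If, say, $A(\mathbf{P})\neq0$ then $A/A=1\in(A,B)_\mathbf{P}$, forcing $i_\mathbf{P}=0$; conversely, if $A(\mathbf{P})=B(\mathbf{P})=0$ then every numerator $MA+NB$ vanishes at $\mathbf{P}$, so $(A,B)_\mathbf{P}$ lies in the kernel of evaluation while $1$ does not, giving $1\notin(A,B)_\mathbf{P}$ and $i_\mathbf{P}>0$.

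For (e) I would first check that $\mathbf{P}_\times$ in (\ref{l1.l2}) is the ``cross product'' of the two coefficient vectors, so that $L(\mathbf{P}_\times)=L'(\mathbf{P}_\times)=0$ by antisymmetry; distinctness of the lines makes these vectors linearly independent, so $\mathbf{P}_\times\neq(0,0,0)$ is a genuine point and, being the one-dimensional solution space of two independent linear forms, is the unique common zero. For the multiplicity, I would invoke that a projective linear change of coordinates induces an isomorphism of local rings carrying $(L,L')_\mathbf{P}$ to the transformed ideal, hence preserving $i_\mathbf{P}$; this lets me normalize to $L=x$, $L'=y$, $\mathbf{P}_\times=(0,0,1)$. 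Writing any numerator as $S(0,0,z)+xM+yN$ (and similarly the denominator, whose value at $\mathbf{P}$ is nonzero) shows every element of $R_\mathbf{P}$ is congruent modulo $(x,y)_\mathbf{P}$ to a unique scalar, so $R_\mathbf{P}/(x,y)_\mathbf{P}\cong\overline K$ and $i_{\mathbf{P}_\times}=1$.

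The crux is (c), and here the plan is the standard short exact sequence of $\overline K$-vector spaces
\[
0 \to R_\mathbf{P}/(A,C)_\mathbf{P} \xrightarrow{\,\cdot b\,} R_\mathbf{P}/(A,BC)_\mathbf{P} \xrightarrow{\,\pi\,} R_\mathbf{P}/(A,B)_\mathbf{P} \to 0,
\]
where $\pi$ is the natural projection (well-defined since $(A,BC)_\mathbf{P}\subseteq(A,B)_\mathbf{P}$) and the left map is multiplication by $b=B/L^{\degree B}$, with $L$ a fixed linear form satisfying $L(\mathbf{P})\neq0$ chosen to keep the operation in degree $0$. Exactness immediately yields $i_\mathbf{P}(A,BC)=i_\mathbf{P}(A,B)+i_\mathbf{P}(A,C)$. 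Surjectivity of $\pi$ is clear, and exactness in the middle is a routine rewriting of numerators $MA+NB$ (the term $MA$ already lies in $(A,BC)_\mathbf{P}$, and $NB/T=b\cdot(NL^{\degree B}/T)$). I expect the genuine obstacle to be \emph{injectivity of $\cdot b$}: from $bf\in(A,BC)_\mathbf{P}$ I must deduce $f\in(A,C)_\mathbf{P}$. Clearing denominators reduces this to a polynomial identity of the shape $B\cdot(\text{num}(f)-\cdots)=A\cdot(\cdots)$ in the UFD $\overline K[x,y,z]$; since $\gcd(A,B)=1$, $A$ must divide the remaining factor, and translating back through the (unit) denominators gives $f\in(A,C)_\mathbf{P}$. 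The delicate bookkeeping of homogeneous degrees and of the non-vanishing denominators is concentrated in this step, and it is precisely here that coprimality is indispensable. (Finiteness of the three dimensions, needed for the identity to be meaningful, is standard for curves with no common component and can be cited or established alongside.)
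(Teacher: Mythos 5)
Your proposal is correct and follows essentially the same route as the paper: parts (a), (b), (d) directly from the definition with the same observations about equal ideals, part (c) via the identical short exact sequence $0\to R_\mathbf{P}/(A,C)_\mathbf{P}\to R_\mathbf{P}/(A,BC)_\mathbf{P}\to R_\mathbf{P}/(A,B)_\mathbf{P}\to 0$ with multiplication by $b=B/L^{\degree B}$ and the same clearing-of-denominators-plus-coprimality argument for injectivity. The only cosmetic difference is in (e), where you invoke invariance of $i_\mathbf{P}$ under projective coordinate changes to normalize to $L=x$, $L'=y$, whereas the paper carries out the equivalent substitution explicitly (writing every polynomial in $\overline K[A,B,C]$ via an invertible matrix), thereby avoiding the need to assert that general invariance as a separate fact.
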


\begin{proof}

To prove (a), take $S/T\in R_\mathbf{P}$. If $\mathbf{P}$ is {\it not} on both $A$ and $B$, then
$S/T=AS/AT=BS/BT\in(A,B)_\mathbf{P}$, since at least one of $AT$ and $BT$ is nonzero at $\mathbf{P}$. Hence $R_\mathbf{P}=(A,B)_\mathbf{P}$, so that $i_\mathbf{P}(A,B)=0$. On the other hand, if $\mathbf{P}$ {\it is} on both $A$ and $B$, then all elements of $(A,B)_\mathbf{P}$ are $0$ at $\mathbf{P}$, while the constant $1=1/1$ clearly is not! Hence $R_\mathbf{P}/(A,B)_\mathbf{P}$ is at least one-dimensional.

Properties (b) and (d) are immediately obvious, since $(A,B)_\mathbf{P}=(B,A)_\mathbf{P}$ and $(A,B+AC)_\mathbf{P}=(A,B)_\mathbf{P}$.

For (c), we base our argument on that in \cite[p. 77]{F}. Define two maps
\begin{eqnarray*}
\psi&:&\frac{R_\mathbf{P}}{(A,C)_\mathbf{P}}\rightarrow\frac{R_\mathbf{P}}{(A,BC)_\mathbf{P}},~\overline{w}\mapsto\overline{bw}\\
\phi&:&\frac{R_\mathbf{P}}{(A,BC)_\mathbf{P}}\rightarrow\frac{R_\mathbf{P}}{(A,B)_\mathbf{P}},~\overline{w}\mapsto\overline{w},
\end{eqnarray*}
where $\overline{w}$ denotes the residue of $w\in R_\mathbf{P}$ in the corresponding quotient ring, and $b=B/V^{n}$, where $n=\degree B$ and $V$ is one of $x$, $y$, or $z$,
chosen so that it is nonzero at $\mathbf{P}$.

It is easy to check that both maps $\phi$ and $\psi$ are $\overline K$-linear maps.
We claim that the sequence
\begin{eqnarray*}
\begin{CD}
0@>>>\displaystyle\frac{R_\mathbf{P}}{(A,C)_\mathbf{P}}@>\psi>>\displaystyle\frac{R_\mathbf{P}}{(A,BC)_\mathbf{P}}@>\phi>>\displaystyle\frac{R_\mathbf{P}}{(A,B)_\mathbf{P}}@>>>0
\end{CD}
\end{eqnarray*}
is exact.

Supposing that $\overline{w}\in\ker\psi$, we get $bw\in(A,BC)_\mathbf{P}$ which, on multiplying by $V^nU$, say, to clear denominators, gives $SA=B(D-TC)$ for some $D,S,T\in \overline K[x,y,z]$, with $w=D/U$. As $A$ and $B$ have no common factor, $A$ must divide $D-TC$, so that, on dividing by $U$, we have $w=D/U\in(A,C)_\mathbf{P}$, Hence $\overline{w}=0$, and $\psi$ is injective.

It is easy to show that $\text{im}\psi=\ker\phi$, by checking  inclusion in both directions. Also, it is clear that $\phi$ is surjective, completing the verification of exactness.
By the rank-nullity theorem from linear algebra, this then implies (c).

To prove (e), take $A$ and $B$ to be the lines of Proposition \ref{a.bprops}(d). We first note that, by Cramer's rule, the point $\mathbf{P}_\times$ is the (only) point common to both lines, so that, by  Lemma \ref{L-i}(a), $A\cdot B$ is a positive integer multiple of $\mathbf{P}_\times$. We need to show that this multiple is indeed $1$.

Take a third line $C=c_1x+c_2y+c_3z$ so that the matrix
$$
J=\left(\begin{matrix} a_1 & a_2 & a_3\\b_1 & b_2 & b_3\\c_1 & c_2 & c_3\end{matrix}\right)
$$
has nonzero determinant. (This is always possible, as $K^3$ is $3$-dimensional!) Then 
$$
J^{-1}\left(\begin{matrix} A\\B\\C\end{matrix}\right) =\left(\begin{matrix} x\\y\\z\end{matrix}\right),
$$
so that any polynomial in $\overline K[x,y,z]$ can be written as a polynomial in $\overline K[A,B,C]$.
Thus any element $q$ of $R_{\mathbf{P}_\times}$ can be written in the form
$$
q=\frac{AS_1(A,B,C)+BS_2(B,C)+s_0C^k}{AT_1(A,B,C)+BT_2(B,C)+t_0C^k}
$$
for $s_0,t_0\in \overline K$ with $t_0\ne 0$, some positive integer $k$, and polynomials $S_1,S_2,T_1$, and $T_2$. Then, by putting the difference $q-s_0/t_0$ over a common denominator, we see that it belongs to $(A,B)_{\mathbf{P}_\times}$. Hence ${R_{\mathbf{P}_\times}}/{(A,B)_{\mathbf{P}_\times}}$ is spanned by $1$, and so is one-dimensional; thus $i_{\mathbf{P}_\times}(A,B)=1$.
\end{proof}

\end{section}

{\bf ACKNOWLEDGEMENTS.} We are pleased to thank the referees, and Liam O'Carroll, for their constructive comments and suggestions.

\noindent {\bf Jan Hilmar} received his B.A. from St. Mary's College of Maryland in 2004, and his Ph.D. at the University of Edinburgh in 2008. When he is not on his bike, he is doing his National Service in a refugee home in his hometown of Vienna, and working as a freelance web developer.

\noindent{\it trafficjan82@gmail.com}

\

\noindent{\bf Chris Smyth} received his B.A. from the Australian National University in 1968, and his Ph.D. in number theory from the University of Cambridge in 1972. After spells in Finland, England, Australia and Canada, he was, when the music stopped,  happy to find himself in Edinburgh, Scotland. He likes walking, sometimes accompanied by Mirabelle, his cat.

\noindent {\it School of Mathematics, University of Edinburgh, Mayfield Road, Edinburgh EH9 3JZ, UK.

\noindent c.smyth@ed.ac.uk}

\begin{thebibliography}{99}


\bibitem{DF} D. S. Dummit and R. M. Foote, {\sl Abstract Algebra}, 2nd ed.,  John Wiley, Hoboken, NJ, 1999. 

\bibitem{F}   W. Fulton, {\sl Algebraic Curves},  W. A. Benjamin,  New York,  1969.

  
\bibitem{K}  A. W. Knapp, {\sl Advanced Algebra},   Birkh\"auser Boston,  Boston, MA, 2007. 


\end{thebibliography}
\end{document}